\newtheoremstyle{dotless}{}{}{\itshape}{}{\bfseries}{}{ }{} 
\theoremstyle{dotless}
\newtheorem{thm}{Theorem}
\begin{document}

\pagenumbering{arabic} \setcounter{page}{1}

\title{A large gap in a dilate of a set}

\author{George Shakan}

 \thanks{The author is supported by Ben Green's Simons Investigator Grant 376201 and also thanks him for introducing him to the question and useful discussions.}
\address{Department of Mathematics \\
University of Oxford }    
\email{shakan@maths.ox.ac.uk}

\begin{abstract}
Let $A \subset \mathbb{F}_p$ with $|A| > 1$. We show there is a $d \in \mathbb{F}_p^{\times}$ such that $d \cdot A$ contains a gap of size at least $2p/ |A| - 2 $. 
\end{abstract}

\maketitle

\section{Introduction}

Let $p$ be a prime and $A \subset \mathbb{F}_p$. We let $ g(A)$ be the largest gap in $A$, that is the largest intger $g(A)$ such that there is a $t \in \mathbb{F}_p$ satisfying
$$(\{1 , \ldots, g(A)\} + t) \cap A = \emptyset.$$
By the pigeon-hole principle, 
\begin{equation} \label{ph} g(A) \geq p / |A| - 1\end{equation}
For $d \in \mathbb{F}_p^{\times}$ we define
$$d \cdot A : = \{da : a \in A\}.$$
We seek lower bounds for 
$$L(A) : = \frac{|A|}{p} \sup_{d \in \mathbb{F}_p^{\times}} g(d \cdot A).$$
Note $L(A)$ is translation and dilation invariant. By \eqref{ph}, we have $L(A) \geq 1 - |A|/p$. Our goal is to double this bound.

\begin{thm}\label{main} Let $p$ be a prime and $A \subset \mathbb{F}_p$ with $|A| > 1$. Then
$$L(A) \geq 2(1 - \frac{|A| }{p} ),$$
or equivalently
$$\sup_{d \in \mathbb{F}_p^{\times}} g(d \cdot A) \geq 2(p/|A| - 1)  .$$
\end{thm}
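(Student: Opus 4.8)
The plan is to pass to the dual formulation, reduce to a second–moment statement about the cyclic spacings of a dilate, and then locate a good dilate by averaging. Write $n:=|A|$. A gap of size $g$ in $d\cdot A$ is exactly a $g$-term arithmetic progression with common difference $d^{-1}$ contained in $\F_p\setminus A$, so $\sup_{d\in\F_p^\times} g(d\cdot A)$ equals the longest nonzero-difference progression avoiding $A$; it therefore suffices to produce one dilate whose spacings are sufficiently unbalanced. Fix $d$ and let $s_1,\dots,s_n\ge 1$ be the cyclic spacings of $d\cdot A$, so $\sum_i s_i=p$ and $g(d\cdot A)=\max_i s_i-1$. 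Setting $M=\max_i s_i$, the elementary inequality $(s_i-1)(s_i-M)\le 0$ summed over $i$ yields $\sum_i s_i^2\le (M+1)p-Mn$, and hence
\[
g(d\cdot A)\;\ge\;\frac{\Phi(d)-2p+n}{p-n},\qquad \Phi(d):=\sum_i s_i(d)^2 .
\]
Together with the integrality of $g(d\cdot A)$, this reduces Theorem \ref{main} to exhibiting a single $d$ with $\Phi(d)$ larger than a threshold of size $\tfrac{2p^2}{n}-O(p)$. Since $\max_d\Phi(d)\ge \tfrac1{p-1}\sum_{d\neq0}\Phi(d)$, the natural goal becomes a lower bound for the average of $\Phi$.

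The average can at least be rewritten cleanly. For $a\in A$ and $r\in\F_p^\times$ put $s(a,r)=\min\{k\ge1:\ a+kr\in A\}$; since the forward spacing at $da$ in $d\cdot A$ equals $s(a,d^{-1})$, one gets $\sum_{d\neq0}\Phi(d)=\sum_{a\in A}\sum_{r\neq0}s(a,r)^2$. Using $\sum_{m=1}^{s}(2m-1)=s^2$ this becomes $\sum_{m\ge1}(2m-1)H_m$, where $H_m=\#\{(a,r):\ a\in A,\ a+r,\dots,a+(m-1)r\notin A\}$ counts the pairs with $s(a,r)\ge m$. The two lowest counts are forced: $H_1=n(p-1)$ and $H_2=n(p-n)$, which fix the total number of pairs and the number of unit spacings $H_1-H_2=n(n-1)$, but leave the distribution of the larger spacings undetermined.

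The hard part is precisely this lower bound on $\tfrac1{p-1}\sum_d\Phi(d)$, and I expect it to be the main obstacle. The quantity is not a function of $n$ alone: already $H_3=n(p-2n+1)+(\#\ \text{three-term progressions in }A)$, and more generally the higher $H_m$ grow with the number of longer progressions, so $\Phi$ is on average smallest for the most progression-free, most "evenly spread" sets. One must therefore show that even for such sets the average still clears the $\tfrac{2p^2}{n}-O(p)$ threshold — equivalently, that $A$ cannot have \emph{all} of its dilates close to equally spaced. The heuristic is compelling: a set equally spaced in one direction is an arithmetic progression, which has one enormous gap, so balance in every direction is impossible; and the extremal configurations are the perfect difference sets (such as $\{0,1,3\}\subset\F_7$), for which every dilate has the same spacing multiset and the value $2p/|A|-2$ is attained. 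Converting this heuristic into the exact factor-$2$ inequality, saturated by difference sets, is the step I would anticipate to require the real work — presumably a sharp convexity or energy estimate rather than the crude bounds $H_m\ge n(p-1)-n(m-1)(n-1)$, which lose too much to recover the constant $2$.
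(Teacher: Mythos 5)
Your preliminary reductions are all correct: the identification of gaps in $d\cdot A$ with progressions of difference $d^{-1}$ avoiding $A$, the inequality $g(d\cdot A)\ge \bigl(\Phi(d)-2p+n\bigr)/(p-n)$ obtained from $(s_i-1)(s_i-M)\le 0$, and the counts $H_1=n(p-1)$, $H_2=n(p-n)$, $H_3=n(p-2n+1)+T$. But the argument then stops exactly where the theorem begins, and — more seriously — the step you defer to ("the average of $\Phi$ clears $2p^2/n-O(p)$") is not just unproved but \emph{false}, so the plan cannot be completed as stated. Take $A$ to be a $3$-term arithmetic progression, say $A=\{0,1,2\}$ (the average of $\Phi$ is dilation invariant). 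For $1\le d\le (p-1)/2$ the dilate $d\cdot A$ has cyclic spacings $d,d,p-2d$, so $\Phi(d)=2d^2+(p-2d)^2$, and summing exactly gives $\frac{1}{p-1}\sum_{d\ne 0}\Phi(d)=\frac{p(p-1)}{2}$. Your reduction with $n=3$ requires $\Phi\ge \frac{2p^2}{3}-2p+3$, so the average falls short by $\frac{p^2}{6}-O(p)$; it certifies a gap of only about $p/2$, below the required $2p/3-2$, and no appeal to integrality can absorb a deficit of order $p^2$. The mechanism of failure is exactly the heuristic you cite, turned against you: for an AP the single dilate with an enormous gap (the one sending $A$ to an interval) contributes only $O(p)$ to the average, while the typical dilate is nearly equally spaced (for $d\approx p/3$, all spacings are $\approx p/3$) and drags the average down. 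The theorem is a statement about the \emph{best} dilate, and the best dilate can be better than the average one by a constant factor — precisely the factor you need. Note also that even in your proposed extremal example $\{0,1,3\}\subset\F_7$ every dilate has $\Phi(d)=21$, which is already below the threshold $2p^2/n-2p+n=65/3$; the inequality $(s_i-1)(s_i-M)\le 0$ is lossy whenever spacings take intermediate values, so even a sharp "difference sets minimize the average" lemma would not close the argument.

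The paper's proof avoids averaging entirely and works with the extremal dilate directly, via R\'edei's method of lacunary polynomials. One sets $m=\sup_{d}g(d\cdot A)+1$ and $B=\{1,\dots,m\}$, observes that every line $y=dx+t$ with $d\in\F_p^{\times}$ meets $A\times B$, encodes this as the vanishing of $w(d,t)=d\prod_{a\in A,b\in B}(b+da+t)$ on all of $\F_p^2$, and applies the Combinatorial Nullstellensatz to write $f(t)=\prod_{a\in A}(t+a)^{|B|}=t^pg(t)+h(t)$ with $\deg g,\deg h\le k=|A||B|-p+1$. A Wronskian/derivative argument ($\prod_a(t+a)^{|B|-1}$ divides $h'g-hg'$, which has degree at most $2k-1$ unless it vanishes identically) then forces $|A||B|\ge 2p-|A|-1$, i.e.\ a lower bound on $m$ itself, after which integrality finishes the proof. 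If you want to salvage your framework you would need a substitute for $\max_d\Phi(d)\ge\operatorname{avg}_d\Phi(d)$ that exploits the structure of sets whose dilates are all nearly balanced; the polynomial method is, in effect, the tool that rules such sets out.
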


We prove Theorem~\ref{main} using the polynomial method, or more precisely Redei's method \cite{Re}. Our work bears some similarity to the recent work \cite{D}. As asked by Ben Green \cite{BG}, it would be of interest to better understand $L(A)$, especially in the special case $|A| \sim \sqrt{p}$. In this case, is $L(A) \geq C$ for any fixed $C$? We remark that for $|A| \leq 1/100 \log p$ or $|A| \sim cp$, with $0 < c < 1$, one may improve upon Theorem~\ref{main} by applying Dirichlet's box principle and Szemer\'edi's theorem \cite{S}, respectively.

\section{Proof of Theorem~\ref{main}}

\begin{proof}[Proof of Theorem~\ref{main}]
We apply the polynomial method. We may suppose $|A| < p$ and set
$$m = \sup_{d \in \mathbb{F}_p^{\times}} g(d\cdot A) + 1.$$ 
Since $|A| > 1$, we have $m< p$. Let $B = \{1 , \ldots , m\}$ and 
$$P = A \times B \subset \mathbb{F}_p^2.$$
Thus the line $y = dx + t$ intersects $P$ for every $d \in \mathbb{F}_p^{\times}$ and $t \in \mathbb{F}_p$. 
 Let
$$k : = |A| |B| - p + 1,$$
and 
$$w(d,t) =   d \prod_{a\in A, b \in B} (b + da + t).$$
It follows that $w$ vanishes on $\mathbb{F}_p^2$ and so by \cite[Theorem 1]{Al}, we have 
$$w(d,t) =   (t^p - t) u(d,t) +(d^p - d)v(d,t),$$
for some $u,v$ of degree at most $k$. Taking the homogeneous part of degree $|B||A| + 1$ and setting $d = 1$, we find a $g$ and $h$ of degree at most $k$ such that 
$$f(t) := \prod_{a \in A} (t+ a)^{|B|} = t^pg(t) + h(t).$$
Then for every $a \in A$,
$$ (t+a)^{|B| - 1} | f'(t) , \ \ \ f'(t) = t^p g'(t) + h'(t) , $$
and so $(t + a)^{|B| - 1}$ must also divide
$$ (t^p g'(t) + h'(t))g(t) - (t^p g(t) + h(t))g'(t)  = h'(t) g(t) - h(t) g'(t).$$
In other words, $$\prod_{a \in A} (t + a)^{|B|  - 1},$$ divides a polynomial of degree at most $2k -1$ . We conclude $ (|B| - 1)|A| \leq  2k -1$, which implies
\begin{equation}\label{1st} p \leq k + |A|, \end{equation}
or 
\begin{equation}\label{2nd}h'(t) g(t) = h(t) g'(t) .\end{equation}
We claim if \eqref{2nd} holds with ${\rm deg }(g) , {\rm deg}(h) < p$, then $g(t)$ and $h(t)$ have the same roots (with multiplicity) in an algebraic closure of $\mathbb{F}_p$. Indeed, suppose that $(t+\alpha)^d | g(t)$ but $(t+\alpha)^{d+1} \nmid g(t)$ for some $\alpha \in \overline{\mathbb{F}_p}$.  Thus there is a $g_2 \in \overline{\mathbb{F}_p}[t] \setminus (t+\alpha)\overline{\mathbb{F}_p}[t] $ such that 
$$ g(t) = g_2(t)(t+\alpha)^d,  \ \ \ g'(t) = (t+\alpha)^{d-1}( dg_2(t) + (t+\alpha) g_2'(t)). $$
We have  $(t+\alpha)^{d-1} | g'(t)$ and since $d < p$, we also have $(t+\alpha)^{d} \nmid g'(t)$. Thus by \eqref{2nd} we find $(t+\alpha) | h(t)$. Then we may let $g(t) = (t+\alpha)g_1(t)$ and $h(t) = (t+\alpha) h_1(t)$ for some $g_1 , h_1 \in \overline{\mathbb{F}}_p[t]$. Substituting these into \eqref{2nd} and simplifying reveals
$$h_1'(t) g_1(t)(t+\alpha)^2 = h_1(t) g_1'(t)(t+\alpha)^2.$$ Thus \eqref{2nd} is satisfied for $g_1(t)$ and $h_1(t)$. The claim follows by induction on ${\rm deg}(g)$.


Thus if $k < p$ and \eqref{2nd} holds then $$ h(t) = c g(t) , \ \ \ c \in \mathbb{F}_p,$$
and so 
$$f(t) = (t^p + c) g(t) = (t+c)^p g(t). $$
Since $|B| = m < p$, this is impossible in light of 
$$f(t) = \prod_{a\in A} (t+a)^{|B|} .$$
Thus \eqref{1st} holds or $k \geq p$ and in either case we find $$|A||B| \geq 2p-|A| -1$$
and so 
$$\sup_{d \in \mathbb{F}_p^{\times}} g(d\cdot A)  \geq 2p/|A|  - 2-1/|A|.$$
We now remove the $1/|A|$, using that the left hand side is an integer. If $|A| = 2$, then one easily checks that Theorem~\ref{main} holds. Otherwise $2 < |A| < p$ and so $|A| \nmid 2p$. Thus $2p/|A| - 2$ is not an integer. Since the fractional part of $2p/|A| - 2$ is at least $1/|A|$, we have that 
$$\lceil 2p/|A| - 2 - 1/|A| \rceil \geq 2p/|A| - 2,$$
and so 
 $$\sup_{d \in \mathbb{F}_p^{\times}} g(d\cdot A)  \geq 2p/|A|  - 2.$$

\end{proof}

We remark the above proof fails for general point sets that are not necessarily cartesian products. Indeed, take a blocking set construction \cite[Page 107]{M} in $\mathbb{P}(\mathbb{F}_p^3)$ of size $\sim 3/2 p$. After projective transformation, make the line at infinity contain precisely one point, say $(1,0,0)$. Deleting this point creates a subset, $P$, of affine space such that every non-horizontal line intersects $P$. The proof does remain valid if we restrict to point sets $P$ such that $\pi(P)$ is small, where $\pi$ is a projection onto a coordinate axis.

\end{document}